\documentclass[11pt,reqno]{amsart}

\usepackage[T1]{fontenc}
\usepackage[utf8]{inputenc}
\usepackage{lmodern}
\usepackage{microtype}
\usepackage{amsmath,amssymb,amsthm,mathtools}
\usepackage{xcolor}
\usepackage{hyperref}
\usepackage[nameinlink,capitalise,noabbrev]{cleveref}
\usepackage{geometry}
\hypersetup{
  colorlinks=true,
  linkcolor=blue!45!black,
  citecolor=blue!45!black,
  urlcolor=blue!45!black,
  pdftitle={A sharp isoperimetric inequality for the Neumann--Poincare operator in every dimension},
  pdfauthor={Matthew J. Colbrook and Siavash Sadeghi}
}

\newtheorem{theorem}{Theorem}[section]
\newtheorem{proposition}[theorem]{Proposition}
\newtheorem{lemma}[theorem]{Lemma}
\newtheorem{corollary}[theorem]{Corollary}
\theoremstyle{remark}
\newtheorem{remark}[theorem]{Remark}

\numberwithin{equation}{section}

\newcommand{\R}{\mathbb R}
\newcommand{\Sph}{\mathbb S}
\newcommand{\cH}{\mathcal H}
\newcommand{\cV}{\mathcal V}
\newcommand{\Tr}{\operatorname{Tr}}
\newcommand{\dd}{\,\mathrm d}
\newcommand{\ip}[2]{\left\langle #1,#2\right\rangle}
\newcommand{\duality}[2]{\left\langle #1,#2\right\rangle_{\partial\Omega}}
\newcommand{\norm}[1]{\left\lVert #1\right\rVert}
\newcommand{\restr}{\big|}

\title[A sharp inequality for the Neumann--Poincar\'e operator]
{A sharp isoperimetric inequality for the\\
Neumann--Poincar\'e operator in every dimension}

\author{Matthew J. Colbrook}
\address{Department of Applied Mathematics and Theoretical Physics, University of Cambridge, Wilberforce Road, Cambridge CB3 0WA, United Kingdom}
\email{m.colbrook@damtp.cam.ac.uk}

\author{Siavash Sadeghi}
\address{Department of Mathematics and Statistics, Mathematics Building, University of Reading, Whiteknights campus, Reading RG6 6AX, United Kingdom}
\email{s.sadeghi@pgr.reading.ac.uk}

\subjclass[2020]{Primary 47A75, 35P15; Secondary 31B10, 35J05, 74Q20}
\keywords{Neumann--Poincar\'e operator, isoperimetric inequality, polarization tensor, Hashin--Shtrikman bound, Newtonian potential, ellipsoid}

\begin{document}

\begin{abstract}
Let $\Omega\subset\R^d$, $d\ge2$, be a bounded connected domain with boundary of class $C^{1,\alpha}$, where $0<\alpha<1$. For the adjoint Neumann--Poincar\'e operator $K^*_{\partial\Omega}$, normalised so that its distinguished eigenvalue is $1/2$, let $\lambda_j^+(\Omega)$ denote the upper min--max values on the mean-zero energy space. We prove
$$
   \sum_{j=1}^{d}\lambda_j^+(\Omega)\ge \frac{d-2}{2}.
$$
It follows that
$$
   \lambda_1^+(\Omega)\ge \frac{d-2}{2d},
$$
with equality if and only if $\Omega$ is a ball. In dimension three this proves the $1/6$-conjecture of Miyanishi and Suzuki. The proof uses the coordinate boundary-charge densities induced by uniform applied fields. Their energy Gram matrix is the perfect-conductor polarization tensor $M_\infty$. Positivity of a $2d\times2d$ Gram matrix yields the endpoint Hashin--Shtrikman inequality
$$
   |\Omega|\Tr(M_\infty^{-1})\le1,
$$
and bounds the trace of the compression of $K^*_{\partial\Omega}$ to the applied-field space. Equality in the inverse-trace inequality makes the interior Newtonian potential quadratic, so a converse to Newton's theorem identifies $\Omega$ as an ellipsoid. Equality in the spectral estimate also makes the Hessian of this potential isotropic, which forces the ellipsoid to be a ball.
\end{abstract}

\maketitle

\section{Introduction}

The Neumann--Poincar\'e operator is central to classical layer-potential theory. On a $C^{1,\alpha}$ boundary it is compact and, although generally not self-adjoint in $L^2$, Plemelj symmetrisation makes it self-adjoint in the single-layer energy space. Its nontrivial spectrum is consequently real and variational. Covariance of the kernel under Euclidean similarities makes this spectrum invariant under translations, rotations, and dilations. The operator appears in transmission and inverse problems and in the theory of polarization tensors \cite{AmmariKang2007}, as well as in quasistatic plasmonic resonance \cite{Grieser2014}. Its energy-space spectral theory is developed in \cite{KhavinsonPutinarShapiro2007,PerfektPutinar2014}.

We denote by $\lambda_j^+(\Omega)$ the upper min--max values of $K^*_{\partial\Omega}$ on the mean-zero energy space. Equivalently, these are the positive eigenvalues in nonincreasing order, followed by zeros. The variational framework underlying the spectral isoperimetric question goes back to Poincar\'e's 1897 theory \cite{Poincare1897,KhavinsonPutinarShapiro2007}. In three dimensions, the spectrum of a sphere below the distinguished eigenvalue $1/2$ begins with the degree-one eigenvalue $1/6$, of multiplicity three. Miyanishi and Suzuki conjectured, in an equivalent normalisation, that every smooth domain $\Omega\subset\R^3$ whose boundary is a simply connected closed surface satisfies
$$
   \lambda_1^+(\Omega)\ge\frac16,
$$
with equality only for a ball \cite[Conjecture~1]{MiyanishiSuzuki2017}. Using Martensen's cluster-sum identity \cite{Martensen1999}, Miyanishi and Suzuki proved the conjecture within the class of ellipsoids \cite[Theorem~4.3]{MiyanishiSuzuki2017}. Ando, Kang, Miyanishi, and Ushikoshi subsequently referred to this as the ``$1/6$-conjecture'' and, using Grieser's Hadamard-type variation formula, proved that the sum of the first variations in each spherical eigenvalue cluster vanishes \cite{Grieser2014,AndoKangMiyanishiUshikoshi2019}. The global comparison over the conjectured class remained unresolved for more than a decade; see also \cite{DallaRivaLambertiLuzziniMusolino2025}.

In $\R^d$, the degree-one spherical harmonics have eigenvalue
$$
   c_d:=\frac{d-2}{2d}.
$$
This suggests the same extremal problem in every dimension. The following theorem proves the resulting bound and, more strongly, controls the sum of the first $d$ upper min--max values. For $d=3$, \eqref{eq:main-max} is the $1/6$-conjecture. For $d=2$, the lower bound is zero, while the equality statement remains geometric: the upper spectral edge on the mean-zero energy space vanishes only for a disc.

\begin{theorem}[Main theorem]\label{thm:main}
Let $d\ge2$, and let $\Omega\subset\R^d$ be a bounded connected domain with boundary of class $C^{1,\alpha}$ for some $0<\alpha<1$. Then
\begin{equation}\label{eq:main-sum}
   \sum_{j=1}^{d}\lambda_j^+(\Omega)\ge\frac{d-2}{2}.
\end{equation}
In particular,
\begin{equation}\label{eq:main-max}
   \lambda_1^+(\Omega)\ge\frac{d-2}{2d}.
\end{equation}
Equality holds in \eqref{eq:main-max} if and only if $\Omega$ is a ball.
\end{theorem}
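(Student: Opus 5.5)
The abstract already indicates the route, and I will follow it. Work in the single-layer energy space $\cH^*$ on $\partial\Omega$, with inner product $\ip{\phi}{\psi}_{\cH^*}=-\duality{\phi}{S\psi}$. Here $S$ is the single-layer potential, suitably normalised (in $d=2$, after restricting to mean zero). On this space $K^*:=K^*_{\partial\Omega}$ is self-adjoint.

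For each $i=1,\dots,d$, let $\phi_i\in\cH^*_0$ be the mean-zero density solving $(\tfrac12 I-K^*)\phi_i=\nu_i$, where $\nu$ is the outward normal. This is the boundary charge induced on a perfect conductor by the uniform field $e_i$. Its energy Gram matrix $G_{ij}=\ip{\phi_i}{\phi_j}_{\cH^*}$ is, up to normalisation, the polarization tensor $M_\infty$. Let $\cV=\operatorname{span}\{\phi_i\}$. The min--max (Ky Fan) principle on the mean-zero energy space gives
\begin{equation*}
\sum_{j=1}^d\lambda_j^+(\Omega)\ \ge\ \Tr\bigl(P_\cV K^* P_\cV\bigr)=\Tr\bigl(G^{-1}A\bigr),\qquad A_{ij}=\ip{K^*\phi_i}{\phi_j}_{\cH^*}.
\end{equation*}
Equality in the Ky Fan step is not needed for the sum bound, since only nonnegative $\lambda_j^+$ enter. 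If the compression has negative eigenvalues, the inequality still holds because $\lambda_j^+\ge0$. So the task is to compute $\Tr(G^{-1}A)$ geometrically and show it is at least $(d-2)/2$.

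Using $K^*\phi_i=\tfrac12\phi_i-\nu_i$, we get $A=\tfrac12 G-N$ with $N_{ij}=\ip{\nu_i}{\phi_j}_{\cH^*}$. By Green's formula, $-\duality{\nu_i}{S\phi_j}$ is a volume integral: it equals $\int_\Omega\partial_i(S\phi_j)$, an interior-field quantity. Since $S\phi_j$ in $\Omega$ equals $-x_j$ plus a constant, up to normalisation, this gives $N=c\,|\Omega|\,I$ for an explicit constant $c$. Hence $\Tr(G^{-1}A)=\tfrac d2-c|\Omega|\Tr(G^{-1})$. The target inequality is then exactly the endpoint Hashin--Shtrikman bound $|\Omega|\Tr(M_\infty^{-1})\le1$ once normalisations are matched; for the ball both sides equal $d\cdot\tfrac{d-2}{2d}$.

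To prove the Hashin--Shtrikman bound, form the $2d\times2d$ Gram matrix of $\{\phi_i\}\cup\{\nu_i\}$ in $\cH^*$. We have $\ip{\nu_i}{\nu_j}_{\cH^*}=-\duality{\nu_i}{S\nu_j}$, and $S\nu_j=-\partial_j\mathcal N_\Omega$, where $\mathcal N_\Omega$ is the Newtonian potential of $\Omega$. So this block equals $\int_\Omega\partial_i\partial_j\mathcal N_\Omega$, the matrix $Q$, whose trace is $|\Omega|$ (up to sign and constant) since $\Delta\mathcal N_\Omega=\pm1$ in $\Omega$. Positivity of $\begin{pmatrix}G&N\\N^T&Q\end{pmatrix}$ gives the Schur complement $Q-N^TG^{-1}N\succeq0$. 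Taking traces yields $c^2|\Omega|^2\Tr(G^{-1})\le\Tr Q=|\Omega|$, which is the desired inequality. Equality forces $\nu_i\in\cV$, so the interior Newtonian potential $\mathcal N_\Omega$ is quadratic on $\Omega$. The converse to Newton's theorem (Dive/Nikliborc-type, valid in all $d$) then shows $\Omega$ is an ellipsoid.

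For the characterization of equality in \eqref{eq:main-max}, suppose $\lambda_1^+=c_d$. Then $\sum_j\lambda_j^+\le d c_d=(d-2)/2$, so every inequality in the chain is an equality. In particular the Hashin--Shtrikman bound is an equality, so $\Omega$ is an ellipsoid. Also all eigenvalues of the compression equal $c_d$. Equality in the Schur step gives $Q=N^TG^{-1}N=c^2|\Omega|^2G^{-1}$, and equal compressed eigenvalues force $G\propto I$. Hence the Hessian of the quadratic Newtonian potential is isotropic, and for an ellipsoid this means all semi-axes are equal, so $\Omega$ is a ball. The converse direction follows from the spherical-harmonic computation for the ball.

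The main obstacles I expect are, first, the bookkeeping of signs and normalisation constants (especially in $d=2$ with the logarithmic kernel and the mean-zero constraint). Second, verifying that the Newton-converse theorem applies under only $C^{1,\alpha}$ regularity. For the latter I would invoke the known result for domains whose interior Newtonian potential is a quadratic polynomial, citing the classical results of Dive and DiBenedetto--Friedman.
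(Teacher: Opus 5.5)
Your proposal is correct and is essentially the paper's proof. It uses the same applied-field trial space with Ky Fan, the same $2d\times2d$ Gram matrix whose Schur complement together with $\Tr Q=|\Omega|$ gives $|\Omega|\Tr(M_\infty^{-1})\le1$, and the same rigidity chain; reading isotropy off $Q=|\Omega|^2G^{-1}$ and $Q=|\Omega|\nabla^2\mathcal N_\Omega$ is a slightly shorter path than the paper's $\psi_a=d\,n_a$ step.

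Two verifications are left implicit. First, the converse to Newton's theorem, in the form the paper cites from Karp, is applied only after checking that $\R^d\setminus\overline\Omega$ is connected. That is not automatic here, since $\partial\Omega$ may be disconnected. The paper gets it by a flux argument: on a bounded complementary component $D$ the potential is harmonic, so its flux through $\partial D$ vanishes, yet $C^1$ matching with the quadratic gives flux $\pm|D|$. Second, the claim that equal depolarization factors force equal semi-axes needs the short argument that the factors are monotone in the semi-axes.
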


We outline the proof. Let $\nu$ denote the outward unit normal, $S$ the single-layer operator, $\ip{\cdot}{\cdot}_*$ the single-layer energy inner product, and $P_\Omega$ the Newtonian potential of $\Omega$. On the mean-zero energy space, set
$$
   A:=\frac12I-K^*_{\partial\Omega},
$$
which is invertible by \cref{lem:A-positive}. For $a\in\R^d$, define
$$
   n_a:=a\cdot\nu,
   \qquad
   \psi_a:=A^{-1}n_a.
$$
The boundary-charge density $\psi_a$ is induced by the uniform applied field $a$. Their span is the $d$-dimensional space
$$
   \cV:=\{\psi_a:a\in\R^d\}.
$$
If $M_\infty$ denotes the perfect-conductor polarization tensor, then the energy form and the $A$-form on $\cV$ are
\begin{equation}\label{eq:intro-two-forms}
   \ip{\psi_a}{\psi_b}_*=a^{\mathsf T}M_\infty b,
   \qquad
   \ip{\psi_a}{A\psi_b}_*=|\Omega|\,a\cdot b.
\end{equation}
If $\mu_1\ge\cdots\ge\mu_d>0$ are the eigenvalues of $M_\infty$, the corresponding Ritz values of $K^*_{\partial\Omega}$ are
\begin{equation}\label{eq:intro-ritz}
   r_j=\frac12-\frac{|\Omega|}{\mu_j}.
\end{equation}

The second ingredient is the endpoint inverse-trace inequality
\begin{equation}\label{eq:intro-hs}
   |\Omega|\Tr(M_\infty^{-1})\le1.
\end{equation}
It is the perfect-conductor endpoint of the Hashin--Shtrikman polarization bound. For the standard basis $e_1,\dots,e_d$, let $n_i=n_{e_i}$, $\psi_i=\psi_{e_i}$, and $N_{ij}=\ip{n_i}{n_j}_*$. The Gram matrix of $\psi_1,\dots,\psi_d,n_1,\dots,n_d$ satisfies
\begin{equation}\label{eq:intro-block}
   \begin{pmatrix}
      M_\infty&|\Omega|I_d\\
      |\Omega|I_d&N
   \end{pmatrix}\succeq0.
\end{equation}
Here $\succeq$ denotes the Loewner order. The identities $\partial_jP_\Omega=Sn_j$ and $\Delta P_\Omega=-1$ in $\Omega$ give $\Tr(N)=|\Omega|$. Taking the Schur complement in \eqref{eq:intro-block} and then the trace proves \eqref{eq:intro-hs}. It follows from \eqref{eq:intro-ritz} that
$$
   \sum_{j=1}^dr_j
   =\frac d2-|\Omega|\Tr(M_\infty^{-1})
   \ge\frac{d-2}{2}.
$$
Ky Fan's variational principle applied to the compression on $\cV$ now gives \eqref{eq:main-sum}.

The proof has two rigidity stages. Equality in \eqref{eq:intro-hs} is equivalent to $n_1,\dots,n_d\in\cV$, which makes every component of $\nabla P_\Omega$ affine in $\Omega$. An all-dimensional converse to Newton's theorem then shows that $\Omega$ is an ellipsoid. Earlier forms of this converse are due to Dive and Nikliborc in dimension three, to H\"older in dimension two, and to DiBenedetto--Friedman in arbitrary dimension; we use Karp's formulation \cite{Dive1931,Nikliborc1932,Holder1932,DiBenedettoFriedman1986,Karp1994}. Equality in \eqref{eq:main-max} further gives $M_\infty=d|\Omega|I_d$ and
$$
   P_\Omega(x)=-\frac{|x|^2}{2d}+b\cdot x+C
   \qquad (x\in\Omega).
$$
The Hessian is isotropic, and the ellipsoid is therefore a ball.

\section{Layer potentials and the energy realisation}\label{sec:energy}

Throughout, $d\ge2$ and $\Omega\subset\R^d$ satisfies the hypotheses of \cref{thm:main}. All Sobolev spaces and duality pairings are real. Let $\nu$ denote the outward unit normal, let $|\Omega|$ be the $d$-dimensional volume of $\Omega$, and let $|\Sph^{d-1}|$ be the surface measure of the unit sphere. We use the fundamental solution of $\Delta$ given by
\begin{equation}\label{eq:fundamental}
   \Gamma_d(x)=
   \begin{cases}
      \displaystyle \frac1{2\pi}\log|x|, & d=2,\\[1ex]
      \displaystyle -\frac1{(d-2)|\Sph^{d-1}|}|x|^{2-d}, & d\ge3,
   \end{cases}
   \qquad \Delta\Gamma_d=\delta_0.
\end{equation}
For smooth densities, the single-layer potential is
\begin{equation}\label{eq:single-layer}
   S\varphi(x)=\int_{\partial\Omega}\Gamma_d(x-y)\varphi(y)\,\dd\sigma(y).
\end{equation}
The Neumann--Poincar\'e operator and its adjoint are initially defined on smooth boundary data by
\begin{equation}\label{eq:np-def}
   Kf(x)=\operatorname{p.v.}\int_{\partial\Omega}
      \partial_{\nu_y}\Gamma_d(x-y)f(y)\,\dd\sigma(y),\qquad
   K^*\varphi(x)=\operatorname{p.v.}\int_{\partial\Omega}
      \partial_{\nu_x}\Gamma_d(x-y)\varphi(y)\,\dd\sigma(y).
\end{equation}
They extend to bounded operators on $H^{1/2}(\partial\Omega)$ and $H^{-1/2}(\partial\Omega)$, respectively, while the boundary trace of $S$ maps $H^{-1/2}(\partial\Omega)$ to $H^{1/2}(\partial\Omega)$. We use $S$ for both the potential and its trace. The notation $\duality{\varphi}{f}$ denotes the duality between $H^{-1/2}(\partial\Omega)$ and $H^{1/2}(\partial\Omega)$, with the distribution in the first slot. These mapping properties and the jump relations are standard; see, for example, \cite{McLean2000}. The interior normal derivative satisfies
\begin{equation}\label{eq:jump}
   \partial_\nu S\varphi\restr_-=
      \left(-\frac12I+K^*\right)\varphi.
\end{equation}

Define
\begin{equation}\label{eq:H0}
   \cH_0^*:=\left\{\varphi\in H^{-1/2}(\partial\Omega):
       \duality{\varphi}{1}=0\right\}.
\end{equation}
In two dimensions the zero-mean restriction removes the logarithmic ambiguity and makes the energy coercive. In dimensions $d\ge3$ it removes the distinguished mode. Since $K1=\frac12$, the space \eqref{eq:H0} is invariant under $K^*$.

On $\cH_0^*$ set
\begin{equation}\label{eq:energy-ip}
   \ip{\varphi}{\psi}_*:=-\duality{\varphi}{S\psi}.
\end{equation}
This is a Hilbert inner product equivalent to the $H^{-1/2}$ norm on $\cH_0^*$. Plemelj's identity
\begin{equation}\label{eq:plemelj}
   SK^*=KS
\end{equation}
shows that $K^*$ is self-adjoint with respect to \eqref{eq:energy-ip}. Since $\partial\Omega$ is $C^{1,\alpha}$, the operator $K^*$ is compact on this space. We henceforth suppress $\partial\Omega$ from $S$, $K$, and $K^*$ unless the underlying boundary changes.

Set
\begin{equation}\label{eq:A-def}
   A:=\frac12I-K^*.
\end{equation}
The following lemma provides the positivity and invertibility needed below.

\begin{lemma}\label[lemma]{lem:A-positive}
The operator $A$ is positive and boundedly invertible on $\cH_0^*$. More precisely,
\begin{equation}\label{eq:A-interior-energy}
   \ip{\varphi}{A\varphi}_*
   =\int_\Omega|\nabla S\varphi|^2\,\dd x
   \qquad(\varphi\in\cH_0^*).
\end{equation}
\end{lemma}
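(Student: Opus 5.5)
We first prove the energy identity \eqref{eq:A-interior-energy} by Green's first identity in $\Omega$, and then deduce positivity and invertibility from it.

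\textbf{The identity.} Let $\varphi\in\cH_0^*$ and set $u=S\varphi$, which is harmonic in $\Omega$ and lies in $H^1(\Omega)$. By the jump relation \eqref{eq:jump}, its interior normal derivative is $(-\tfrac12 I+K^*)\varphi=-A\varphi$. Green's first identity, in its weak form valid for $H^1$ harmonic functions with $H^{-1/2}$ Neumann data, gives
$$
\int_\Omega|\nabla u|^2\dd x=\duality{\partial_\nu u|_-}{u}=-\duality{A\varphi}{S\varphi}.
$$
The space $\cH_0^*$ is invariant under $K^*$, so $A\varphi\in\cH_0^*$. Using the definition \eqref{eq:energy-ip}, the right-hand side equals $\ip{A\varphi}{\varphi}_*=\ip{\varphi}{A\varphi}_*$. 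The last step uses the self-adjointness of $K^*$, which follows from \eqref{eq:plemelj}. We would first verify this for smooth densities and then pass to general $\varphi\in\cH_0^*$ by density and continuity of all terms in $H^{-1/2}$. This proves \eqref{eq:A-interior-energy}, and in particular $A\ge0$.

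\textbf{Injectivity.} Suppose $\ip{\varphi}{A\varphi}_*=0$. Then $\nabla S\varphi=0$ in $\Omega$. Because $\Omega$ is connected, $S\varphi$ is a constant $c$ in $\Omega$, and by continuity of the trace $S\varphi=c$ on $\partial\Omega$. Pairing with $\varphi$ gives
$$
\norm{\varphi}_*^2=-\duality{\varphi}{S\varphi}=-c\duality{\varphi}{1}=0,
$$
since $\varphi$ has mean zero. Hence $\varphi=0$, so $A$ is injective and strictly positive on $\cH_0^*$.

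\textbf{Bounded invertibility.} On $\cH_0^*$ the operator $K^*$ is compact and self-adjoint, so $A=\tfrac12I-K^*$ is Fredholm of index zero. An injective Fredholm operator of index zero is surjective, and by the open mapping theorem its inverse is bounded. Equivalently, the spectrum of $A$ consists of $\tfrac12-\lambda$ with $\lambda$ in the spectrum of $K^*$; strict positivity shows $\tfrac12$ is not an eigenvalue of $K^*$, and compactness shows the eigenvalues of $K^*$ accumulate only at $0$. Hence the spectrum of $A$ is bounded away from $0$, giving a lower bound $A\ge\delta>0$.

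\textbf{Main obstacle.} The main technical point is the weak Green identity with $H^{-1/2}$ normal derivative on a $C^{1,\alpha}$ boundary. We would handle it by the standard definition of the weak conormal derivative for harmonic $H^1$ functions, as in \cite{McLean2000}, which makes the identity hold by construction.
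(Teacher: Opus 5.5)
Your proposal is correct and follows essentially the same route as the paper: the jump relation plus the weak Green identity give the interior-energy formula, connectedness and the mean-zero condition give injectivity, and the Fredholm-index-zero argument gives bounded invertibility. One minor point: the swap $\ip{A\varphi}{\varphi}_*=\ip{\varphi}{A\varphi}_*$ needs only the symmetry of the energy inner product, that is, the symmetry of $S$, and not Plemelj's identity, although invoking self-adjointness, as the paper also does, is harmless.
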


\begin{proof}
By self-adjointness of $A$ in the energy inner product and by \eqref{eq:jump},
$$
   \ip{\varphi}{A\varphi}_*
   =-\duality{A\varphi}{S\varphi}
   =\duality{\partial_\nu S\varphi\restr_-}{S\varphi}.
$$
The weak Green identity gives \eqref{eq:A-interior-energy}. If the right-hand side vanishes, then $S\varphi$ is constant in the connected domain $\Omega$. Since $\varphi$ has mean zero,
$$
   \norm{\varphi}_*^2=-\duality{\varphi}{S\varphi}=0,
$$
so $\varphi=0$. Thus $A$ is injective. It is a compact perturbation of $\frac12I$ and hence Fredholm of index zero, so it is invertible.
\end{proof}

The mean-zero restriction retains every eigenspace apart from the distinguished mode. The equilibrium density gives a $1/2$-eigenfunction of $K^*$ \cite{KhavinsonPutinarShapiro2007}. This eigenvalue is simple: two linearly independent eigenfunctions would have a nonzero linear combination in $\cH_0^*$, contradicting the injectivity of $A$. Moreover, if $K^*\varphi=\lambda\varphi$ with $\lambda\ne1/2$, then
$$
   \lambda\duality{\varphi}{1}
   =\duality{K^*\varphi}{1}
   =\duality{\varphi}{K1}
   =\frac12\duality{\varphi}{1},
$$
and hence $\varphi\in\cH_0^*$. Thus restriction to $\cH_0^*$ removes precisely the distinguished eigenvalue.

For a compact self-adjoint operator $T$ on $\cH_0^*$, define
\begin{equation}\label{eq:upper-variational}
   \lambda_j^+(T):=
   \inf_{\substack{F\subset\cH_0^*\\\dim F=j-1}}
   \ \sup_{\substack{0\ne u\in F^\perp}}
   \frac{\ip{u}{Tu}_*}{\norm{u}_*^2}.
\end{equation}
These min--max values consist of the positive eigenvalues in nonincreasing order, repeated according to multiplicity, followed by zeros; a zero in this sequence need not be an eigenvalue. We use the abbreviation $\lambda_j^+(\Omega)=\lambda_j^+(K^*_{\partial\Omega})$. In particular,
\begin{equation}\label{eq:lambda-edge}
   \lambda_1^+(\Omega)=\sup\sigma(K^*\restr_{\cH_0^*}).
\end{equation}

We calibrate the normalisation on a ball.

\begin{proposition}[Sphere calibration]\label[proposition]{prop:sphere}
Let $B$ be a ball in $\R^d$. On the spherical harmonics of degree $\ell\ge1$,
\begin{equation}\label{eq:sphere-spectrum}
   K^*_{\partial B}Y_\ell
   =\frac{d-2}{2(2\ell+d-2)}Y_\ell.
\end{equation}
Consequently,
\begin{equation}\label{eq:sphere-top}
   \lambda_1^+(B)=\cdots=\lambda_d^+(B)=\frac{d-2}{2d},
\end{equation}
and
\begin{equation}\label{eq:sphere-sum}
   \sum_{j=1}^d\lambda_j^+(B)=\frac{d-2}{2}.
\end{equation}
For $d=2$, all nonconstant eigenvalues on a circle are zero.
\end{proposition}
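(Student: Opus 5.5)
We compute the single-layer potential of a spherical harmonic density explicitly and then read off $K^*_{\partial B}$ from the jump relation \eqref{eq:jump}. By translation and dilation invariance of the spectrum, we may take $B$ to be the unit ball. Fix a spherical harmonic $Y_\ell$ of degree $\ell\ge1$ on $\Sph^{d-1}$. The candidate for $u=SY_\ell$ is
$$
   u(x)=\begin{cases} c\,r^{\ell}Y_\ell(\hat x), & r<1,\\ c\,r^{2-d-\ell}Y_\ell(\hat x), & r>1,\end{cases}
$$
where $r=|x|$ and $\hat x=x/r$. Both pieces are harmonic, since $r^\ell Y_\ell$ is a harmonic polynomial and $r^{2-d-\ell}Y_\ell$ is its Kelvin transform. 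The function $u$ is continuous across $r=1$ and decays at infinity; for $d=2$ it is bounded, which is acceptable because $Y_\ell$ has mean zero. The jump of the normal derivative across the sphere is
$$
   \partial_r u|_+-\partial_r u|_-=c\,(2-d-\ell-\ell)\,Y_\ell=-c(2\ell+d-2)Y_\ell.
$$
The single layer satisfies $\partial_\nu S\varphi|_+-\partial_\nu S\varphi|_-=\varphi$. By uniqueness for the transmission problem with decay at infinity (for $d=2$, mean-zero density and bounded potential), this identifies $SY_\ell=u$ with $c=-1/(2\ell+d-2)$.

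Next, \eqref{eq:jump} gives
$$
   \Bigl(-\tfrac12 I+K^*\Bigr)Y_\ell=\partial_r u|_-=\ell c\,Y_\ell=-\frac{\ell}{2\ell+d-2}Y_\ell .
$$
Therefore
$$
   K^*Y_\ell=\Bigl(\tfrac12-\tfrac{\ell}{2\ell+d-2}\Bigr)Y_\ell=\frac{d-2}{2(2\ell+d-2)}Y_\ell ,
$$
which is \eqref{eq:sphere-spectrum}.

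For the consequences, note that spherical harmonics of degree $\ge1$ span a dense subspace of $\cH_0^*$, since the degree-$0$ functions are exactly the constants excluded by the mean-zero condition. They are also mutually orthogonal in the energy inner product, because $S$ acts diagonally on them. Hence they give an orthogonal eigenbasis of the compact self-adjoint operator $K^*$ on $\cH_0^*$.

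For $d\ge3$ the eigenvalue $\frac{d-2}{2(2\ell+d-2)}$ is positive and strictly decreasing in $\ell$. It is therefore maximal at $\ell=1$, where it equals $\frac{d-2}{2d}$. Its multiplicity there is the dimension $d$ of the space of degree-one harmonics, spanned by the coordinate functions. By the characterisation following \eqref{eq:upper-variational}, the first $d$ upper min--max values all equal $\frac{d-2}{2d}$, which is \eqref{eq:sphere-top}, and summing gives \eqref{eq:sphere-sum}. For $d=2$ every eigenvalue on $\cH_0^*$ vanishes, so $K^*=0$ there and all $\lambda_j^+=0$, consistent with both formulas.

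The only delicate point is the identification $SY_\ell=u$, in particular for $d=2$ with the logarithmic kernel. There one checks directly that the mean-zero density forces $S Y_\ell=O(r^{-1})$, and then applies Liouville-type uniqueness to the difference of $S Y_\ell$ and $u$. Everything else is routine computation.
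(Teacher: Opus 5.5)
Your proposal is correct and follows the paper's route: you identify $S_{\partial B}Y_\ell=-r^\ell Y_\ell/(2\ell+d-2)$ inside the unit ball, read off $K^*_{\partial B}$ from the interior jump relation \eqref{eq:jump}, and then use that the eigenvalues decrease in $\ell$ and that the degree-one space has dimension $d$. You also supply details the paper leaves implicit, namely the exterior ansatz with Liouville uniqueness (including the mean-zero decay when $d=2$) and the completeness and energy-orthogonality of the spherical harmonics in $\cH_0^*$, and these all check out.
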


\begin{proof}
By scaling it is enough to take the unit ball. If $Y_\ell$ is a spherical harmonic of degree $\ell\ge1$, separation of variables gives
$$
   S_{\partial B} Y_\ell(r\theta)=-\frac{r^\ell}{2\ell+d-2}Y_\ell(\theta),
   \qquad 0\le r<1.
$$
Taking the interior normal derivative at $r=1$ and comparing with \eqref{eq:jump} yields \eqref{eq:sphere-spectrum}. The degree-one space has dimension $d$, and the displayed eigenvalues decrease with $\ell$ when $d\ge3$. The remaining assertions follow.
\end{proof}

\section{Applied fields and the polarization tensor}\label{sec:applied}

For $a\in\R^d$, let
\begin{equation}\label{eq:na-psia}
   n_a:=a\cdot\nu,
   \qquad
   \psi_a:=A^{-1}n_a.
\end{equation}
The divergence theorem gives $\duality{n_a}{1}=0$, so these quantities belong to $\cH_0^*$. For the standard coordinate vectors, let $n_i=n_{e_i}$ and $\psi_i=\psi_{e_i}$.

\begin{lemma}[Interior affine field]\label[lemma]{lem:affine-field}
For every $a\in\R^d$ there is a constant $c_a$ such that
\begin{equation}\label{eq:Spsi-affine}
   S\psi_a(x)=-a\cdot x+c_a,
   \qquad x\in\Omega.
\end{equation}
\end{lemma}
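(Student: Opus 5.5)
The plan is to set $u:=S\psi_a$ and show that $u+a\cdot x$ is constant on $\Omega$. By the definition of $\psi_a$ and the jump relation \eqref{eq:jump},
$$
   \partial_\nu S\psi_a\restr_-=\Bigl(-\tfrac12I+K^*\Bigr)\psi_a=-A\psi_a=-n_a=-a\cdot\nu .
$$
So $u$ is harmonic in $\Omega$ with interior Neumann datum $-a\cdot\nu$ in $H^{-1/2}(\partial\Omega)$. The function $v(x):=-a\cdot x$ is also harmonic, and its normal derivative on $\partial\Omega$ is the same, $\partial_\nu v=-a\cdot\nu$.

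Set $w:=u-v$. Then $w\in H^1(\Omega)$, since $S$ maps $H^{-1/2}(\partial\Omega)$ into $H^1_{\mathrm{loc}}$ and $v$ is smooth. Moreover $w$ is harmonic in $\Omega$ with vanishing interior conormal derivative in the weak sense. Concretely, for every $\phi\in H^1(\Omega)$,
$$
   \int_\Omega\nabla w\cdot\nabla\phi\,\dd x=\duality{\partial_\nu w\restr_-}{\phi}=0 .
$$
This is the same weak Green identity already used in the proof of \cref{lem:A-positive}. Taking $\phi=w$ gives $\int_\Omega|\nabla w|^2\,\dd x=0$. Since $\Omega$ is connected, $w$ is a constant $c_a$ on $\Omega$, which is \eqref{eq:Spsi-affine}.

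The only point requiring care is justifying the weak Neumann identity at the regularity available. The density $\psi_a$ is a priori only in $H^{-1/2}$, so one cannot simply apply the classical divergence theorem. I would rely on the same framework as \cref{lem:A-positive}: Green's formula holds for $H^1$ harmonic functions, with the conormal derivative defined in $H^{-1/2}$. Alternatively, one can note that $n_a\in C^{0,\alpha}$ and use the standard mapping properties on $C^{1,\alpha}$ boundaries to upgrade $\psi_a$ to a Hölder density, which makes the classical argument apply.

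Either way the argument is routine. The identification of the Neumann datum via \eqref{eq:jump} and $A\psi_a=n_a$ is the key step.
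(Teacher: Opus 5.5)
Your proof is correct and follows the paper's argument: the jump relation \eqref{eq:jump} together with $A\psi_a=n_a$ shows that $S\psi_a+a\cdot x$ is harmonic with zero Neumann data on the connected domain $\Omega$, hence constant. Your added justification via the weak Green identity in $H^1(\Omega)$ just spells out the step the paper leaves implicit.
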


\begin{proof}
Since $A\psi_a=n_a$, the jump relation gives
$$
   \partial_\nu S\psi_a\restr_-=-n_a=-\partial_\nu(a\cdot x).
$$
Thus the harmonic function $S\psi_a+a\cdot x$ has zero Neumann data on the connected domain $\Omega$ and is constant.
\end{proof}

Define the perfect-conductor polarization tensor $M=M_\infty(\Omega)\in\R^{d\times d}$ by
\begin{equation}\label{eq:M-def}
   M_{ij}:=\duality{\psi_i}{x_j}
   =\int_{\partial\Omega}x_j\psi_i\,\dd\sigma.
\end{equation}
This is the usual infinite-contrast polarization tensor in the normalisation corresponding to \eqref{eq:fundamental}; see \cite[Chapter~4]{AmmariKang2007} and \cite{KangMilton2008}. The identities below connect it to the spectral problem.

\begin{proposition}[Two applied-field forms]\label[proposition]{prop:two-forms}
For all $a,b\in\R^d$,
\begin{align}
   \ip{\psi_a}{\psi_b}_*&=a^{\mathsf T}Mb,\label{eq:M-gram}\\
   \ip{\psi_a}{A\psi_b}_*&=|\Omega|\,a\cdot b.\label{eq:A-form}
\end{align}
In particular, $M$ is symmetric positive definite and
\begin{equation}\label{eq:V-def}
   \cV:=\{\psi_a:a\in\R^d\}
\end{equation}
has dimension $d$.
\end{proposition}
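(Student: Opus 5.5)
The plan is to compute both forms directly from the definition of the energy inner product, using the interior affine field of \cref{lem:affine-field} together with the fact that every density in $\cH_0^*$ annihilates constants.

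For \eqref{eq:M-gram}, by \eqref{eq:energy-ip},
$$
   \ip{\psi_a}{\psi_b}_*=-\duality{\psi_a}{S\psi_b}.
$$
The trace of $S\psi_b$ on $\partial\Omega$ is the boundary value of the interior function in \eqref{eq:Spsi-affine}. This holds because the single-layer potential is continuous across $\partial\Omega$ for $H^{-1/2}$ densities, in the trace sense. Hence $S\psi_b=-b\cdot x+c_b$ on $\partial\Omega$. Since $\psi_a\in\cH_0^*$, the constant $c_b$ drops out, and
$$
   \ip{\psi_a}{\psi_b}_*=\duality{\psi_a}{b\cdot x}=\sum_{i,j}a_ib_j\duality{\psi_i}{x_j}=a^{\mathsf T}Mb,
$$
using linearity $\psi_a=\sum_i a_i\psi_i$, which follows from the linearity of $a\mapsto n_a$ and of $A^{-1}$.

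For \eqref{eq:A-form}, since $A\psi_b=n_b$ we have
$$
   \ip{\psi_a}{A\psi_b}_*=\ip{\psi_a}{n_b}_*=-\duality{n_b}{S\psi_a}.
$$
Here I use the symmetry of the energy inner product, which comes from the symmetry of $S$. Substituting $S\psi_a=-a\cdot x+c_a$ on the boundary and using $\duality{n_b}{1}=0$ gives
$$
   \ip{\psi_a}{A\psi_b}_*=\int_{\partial\Omega}(a\cdot x)(b\cdot\nu)\,\dd\sigma=\int_\Omega \operatorname{div}\bigl((a\cdot x)b\bigr)\,\dd x=|\Omega|\,a\cdot b
$$
by the divergence theorem. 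Alternatively, \eqref{eq:A-interior-energy} polarized gives $\int_\Omega\nabla S\psi_a\cdot\nabla S\psi_b\,\dd x=\int_\Omega a\cdot b\,\dd x$ directly.

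For the consequences, $M$ is symmetric because it is a Gram matrix in a real inner product. Positive definiteness follows from \eqref{eq:A-form}. If $a^{\mathsf T}Ma=\norm{\psi_a}_*^2=0$, then $\psi_a=0$, so $|\Omega||a|^2=\ip{\psi_a}{A\psi_a}_*=0$ and hence $a=0$. The same argument shows that the map $a\mapsto\psi_a$ is injective, so $\dim\cV=d$.

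I do not expect a real obstacle. The only point needing care is justifying that the boundary trace of $S\psi_b$ equals the restriction of the interior affine formula. This is the standard continuity of the single layer, given by the $H^{-1/2}\to H^{1/2}$ mapping together with the interior trace.
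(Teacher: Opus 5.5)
Your proposal is correct and follows the paper's proof essentially line for line. You use \cref{lem:affine-field} with the zero-mean condition for \eqref{eq:M-gram}, symmetry of the energy product followed by the divergence theorem for \eqref{eq:A-form}, and \eqref{eq:A-form} again for injectivity of $a\mapsto\psi_a$, which gives positive definiteness of $M$ and $\dim\cV=d$; your polarized-energy alternative via \eqref{eq:A-interior-energy} and your remark on trace continuity of the single layer are valid points that the paper leaves implicit.
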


\begin{proof}
Using \cref{lem:affine-field} and the fact that $\psi_a$ has zero mean,
$$
   \ip{\psi_a}{\psi_b}_*
   =-\duality{\psi_a}{S\psi_b}
   =\duality{\psi_a}{b\cdot x}
   =a^{\mathsf T}Mb.
$$
This proves \eqref{eq:M-gram}, including symmetry. Since $A\psi_b=n_b$ and $A$ is self-adjoint in the energy product,
\begin{align*}
   \ip{\psi_a}{A\psi_b}_*
   =\ip{\psi_a}{n_b}_*
     =\ip{n_b}{\psi_a}_*
     =-\duality{n_b}{S\psi_a}
     =\int_{\partial\Omega}(a\cdot x)(b\cdot\nu)\,\dd\sigma
     =|\Omega|\,a\cdot b,
\end{align*}
where the last equality is the divergence theorem. If $\psi_a=0$, then \eqref{eq:A-form} gives $|\Omega|\,|a|^2=0$, and hence $a=0$. Thus $a\mapsto\psi_a$ is injective. Equation \eqref{eq:M-gram} now shows that $M$ is positive definite.
\end{proof}

Let $\mu_1\ge\cdots\ge\mu_d>0$ be the eigenvalues of $M$, counted with multiplicity. Combining \eqref{eq:M-gram} and \eqref{eq:A-form} with $K^*=\frac12I-A$ reduces the Ritz problem on $\cV$ to the generalised matrix eigenvalue problem for the pair $\bigl(\frac12M-|\Omega|I_d,M\bigr)$.

\begin{corollary}[Applied-field Ritz values]\label[corollary]{cor:ritz}
The Ritz values of $K^*$ on $\cV$, with respect to the energy inner product, are
\begin{equation}\label{eq:ritz-values}
   r_j=\frac12-\frac{|\Omega|}{\mu_j},
   \qquad j=1,\dots,d.
\end{equation}
In particular,
\begin{equation}\label{eq:ritz-trace}
   \sum_{j=1}^dr_j=\frac d2-|\Omega|\Tr(M^{-1}).
\end{equation}
\end{corollary}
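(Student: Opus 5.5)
The plan is to reduce the Ritz problem on $\cV$ to a generalised symmetric matrix eigenvalue problem in the coordinates $a\mapsto\psi_a$, and then diagonalise $M$.

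By definition, the Ritz values of $K^*$ on $\cV$ are the eigenvalues of the compression $P_\cV K^*\restr_\cV$, where $P_\cV$ is the energy-orthogonal projection onto $\cV$. Equivalently, they are the stationary values of the Rayleigh quotient $\ip{u}{K^*u}_*/\norm{u}_*^2$ on $\cV$. By \cref{prop:two-forms}, $a\mapsto\psi_a$ is a linear isomorphism $\R^d\to\cV$. Writing $u=\psi_a$ and using $K^*=\frac12I-A$ together with \eqref{eq:M-gram} and \eqref{eq:A-form}, the quotient becomes
$$
   \frac{\ip{\psi_a}{K^*\psi_a}_*}{\norm{\psi_a}_*^2}
   =\frac{\tfrac12 a^{\mathsf T}Ma-|\Omega|\,|a|^2}{a^{\mathsf T}Ma}.
$$
The same identities, in bilinear form, show that $r$ is a Ritz value with Ritz vector $\psi_a$ exactly when
$$
   \ip{\psi_b}{(K^*-r)\psi_a}_*=0
   \quad\text{for all } b\in\R^d.
$$
Expanding this condition gives
$$
   \bigl(\tfrac12 M-|\Omega|I_d\bigr)a=rMa.
$$
Since $M$ is symmetric positive definite, this pencil has $d$ real eigenvalues, counted with multiplicity, and a full $M$-orthogonal eigenbasis.

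To compute the eigenvalues, take an orthonormal eigenbasis $v_1,\dots,v_d$ of $M$ with $Mv_j=\mu_jv_j$. Substituting $a=v_j$ gives
$$
   \bigl(\tfrac12\mu_j-|\Omega|\bigr)v_j=r\mu_jv_j,
$$
so
$$
   r_j=\frac12-\frac{|\Omega|}{\mu_j}.
$$
The $\psi_{v_j}$ are mutually energy-orthogonal, because
$$
   \ip{\psi_{v_i}}{\psi_{v_j}}_*=v_i^{\mathsf T}Mv_j=\mu_j\delta_{ij}.
$$
They are also orthogonal for the $A$-form by \eqref{eq:A-form}. Hence they diagonalise the compression, and these $r_j$, listed with multiplicity, are exactly the Ritz values; this gives \eqref{eq:ritz-values}. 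Summing over $j$ and using $\sum_j\mu_j^{-1}=\Tr(M^{-1})$ gives \eqref{eq:ritz-trace}.

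There is no real obstacle here. The only points needing care are using the energy inner product, in which $K^*$ is self-adjoint by \eqref{eq:plemelj}, so that the compression is symmetric and Ritz values are well defined, and invoking the positive definiteness of $M$ from \cref{prop:two-forms}.
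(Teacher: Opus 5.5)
Your proposal is correct and follows the paper's route. In the coordinates $a\mapsto\psi_a$, the energy Gram matrix is $M$ and the $K^*$-form is $\frac12M-|\Omega|I_d$, so diagonalising $M$ gives the Ritz values; your explicit check that the $\psi_{v_j}$ are simultaneously energy-orthogonal and $A$-orthogonal simply spells out the step the paper leaves implicit.
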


\begin{proof}
In the coordinates $a\mapsto\psi_a$, the Gram matrix of the energy inner product is $M$, whereas the form of $K^*$ is
$$
   \frac12M-|\Omega|I_d.
$$
Diagonalising the symmetric positive matrix $M$ gives \eqref{eq:ritz-values} and \eqref{eq:ritz-trace}.
\end{proof}

\section{An endpoint Hashin--Shtrikman inequality in every dimension}\label{sec:HS}

The classical finite-contrast polarization tensor satisfies a family of Hashin--Shtrikman bounds. We derive the inverse-trace endpoint needed here directly from a block Gram matrix.

Define the normal Gram matrix $N\in\R^{d\times d}$ by
\begin{equation}\label{eq:N-def}
   N_{ij}:=\ip{n_i}{n_j}_*.
\end{equation}

\begin{lemma}[Normal trace identity]\label[lemma]{lem:N-trace}
The matrix $N$ is symmetric positive definite and
\begin{equation}\label{eq:N-trace}
   \Tr(N)=|\Omega|.
\end{equation}
\end{lemma}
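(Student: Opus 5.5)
Symmetry and positive semidefiniteness are immediate, since $N$ is the Gram matrix of $n_1,\dots,n_d$ in the energy inner product \eqref{eq:energy-ip}. For definiteness I will show these vectors are linearly independent. If $n_a=a\cdot\nu=0$ in $H^{-1/2}(\partial\Omega)$, the divergence theorem gives
$$
   0=\duality{n_a}{a\cdot x}=\int_\Omega|a|^2\,\dd x=|\Omega|\,|a|^2,
$$
so $a=0$. This is the same argument as in \cref{prop:two-forms}, and it shows $N\succ0$.

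For the trace I pass through the Newtonian potential
$$
   P_\Omega(x)=-\int_\Omega\Gamma_d(x-y)\,\dd y,
$$
whose sign is chosen so that $\Delta P_\Omega=-1$ in $\Omega$. Differentiating under the integral, and using $\partial_{x_j}\Gamma_d(x-y)=-\partial_{y_j}\Gamma_d(x-y)$, the divergence theorem in $y$ gives
$$
   \partial_jP_\Omega(x)=\int_\Omega\partial_{y_j}\Gamma_d(x-y)\,\dd y=\int_{\partial\Omega}\Gamma_d(x-y)\nu_j(y)\,\dd\sigma(y)=Sn_j(x).
$$
The kernel is locally integrable, so this is justified, for instance by excising a small ball around $x$ or by working distributionally. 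The identity holds for all $x\in\R^d$, and $P_\Omega\in C^1(\R^d)$.

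The energy is then computed by Green's identity:
$$
   N_{jj}=-\duality{n_j}{Sn_j}=-\int_{\partial\Omega}\nu_j\,\partial_jP_\Omega\,\dd\sigma=-\int_\Omega\partial_j\bigl(\partial_jP_\Omega\bigr)\,\dd x.
$$
Summing over $j$ gives
$$
   \Tr(N)=-\int_\Omega\Delta P_\Omega\,\dd x=|\Omega|.
$$

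The main obstacle is regularity. The last step requires $\partial_jP_\Omega\in H^1(\Omega)$ with distributional Laplacian $\Delta P_\Omega=-1$ in $\Omega$. This holds because $P_\Omega$ is a volume potential of a bounded density, hence in $W^{2,p}_{\mathrm{loc}}$ for every $p<\infty$, so the divergence theorem applies to the $W^{1,p}$ field $\partial_jP_\Omega$ on the $C^{1,\alpha}$ domain.

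A more intrinsic alternative avoids the Newtonian potential. By \eqref{eq:A-interior-energy} and the interior jump relation, one has $\ip{n_j}{n_j}_*=-\int_{\partial\Omega}\nu_j\,Sn_j\,\dd\sigma$, and one can then sum $\sum_j\nu_j\,Sn_j$ using the kernel identity. I prefer the route through $P_\Omega$ because the abstract proof explicitly indicates it.
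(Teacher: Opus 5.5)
Your proposal is correct and follows essentially the same route as the paper. Definiteness comes from the divergence theorem applied to $(a\cdot x)a$, and the trace from $\partial_jP_\Omega=Sn_j$, $W^{2,p}_{\mathrm{loc}}$ regularity of $P_\Omega$, and $\Delta P_\Omega=-1$ in $\Omega$; the only cosmetic difference is that you compute just the diagonal entries $N_{jj}$, while the paper records the full formula $N_{ij}=-\int_\Omega\partial_i\partial_jP_\Omega\,\dd x$ before taking the trace.
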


\begin{proof}
The energy inner product makes $N$ symmetric and nonnegative. If $a^{\mathsf T}Na=0$, then $n_a=0$ in $\cH_0^*$. Hence $a\cdot\nu=0$ on $\partial\Omega$, and the divergence theorem applied to $(a\cdot x)a$ gives $|\Omega|\,|a|^2=0$; thus $a=0$.

Consider the Newtonian potential, with the logarithmic interpretation when $d=2$,
\begin{equation}\label{eq:P-def}
   P_\Omega(x):=-\int_\Omega\Gamma_d(x-y)\,\dd y.
\end{equation}
In the sense of distributions on $\R^d$,
\begin{equation}\label{eq:gradP}
   \partial_jP_\Omega(x)
   =\int_{\partial\Omega}\Gamma_d(x-y)\nu_j(y)\,\dd\sigma(y)
   =S n_j(x).
\end{equation}
Indeed, this follows by differentiating the characteristic function of $\Omega$. Since $\chi_\Omega\in L^p(\R^d)$ for every finite $p$, local elliptic regularity gives $P_\Omega\in W_{\mathrm{loc}}^{2,p}(\R^d)$. Choosing $p>d$ shows that $P_\Omega\in C^1(\R^d)$, so \eqref{eq:gradP} holds pointwise on $\R^d$ and the boundary traces of its two sides agree. The divergence theorem and \eqref{eq:energy-ip} now give
\begin{equation}\label{eq:N-Hessian}
   N_{ij}
   =-\duality{n_i}{S n_j}
   =-\int_\Omega\partial_i\partial_jP_\Omega\,\dd x.
\end{equation}
Since $\Delta P_\Omega=-1$ in $\Omega$, taking the trace in \eqref{eq:N-Hessian} yields \eqref{eq:N-trace}.
\end{proof}

\begin{theorem}[Endpoint inverse-trace inequality]\label{thm:endpoint-HS}
Under the hypotheses of \cref{thm:main},
\begin{equation}\label{eq:endpoint-HS}
   |\Omega|\Tr(M^{-1})\le1.
\end{equation}
Equality holds if and only if $\Omega$ is an ellipsoid.
\end{theorem}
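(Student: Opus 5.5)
Using \cref{prop:two-forms} and \cref{lem:N-trace}, we form the Gram matrix of $\psi_1,\dots,\psi_d,n_1,\dots,n_d$ in the energy inner product. Its diagonal blocks are $M$ and $N$. Its off-diagonal block has entries $\ip{\psi_i}{n_j}_*=\ip{\psi_i}{A\psi_j}_*=|\Omega|\delta_{ij}$ by \eqref{eq:A-form}. Hence
$$
\begin{pmatrix} M & |\Omega| I_d\\ |\Omega| I_d & N\end{pmatrix}\succeq0 .
$$
Since $N$ is positive definite, we may instead take the Schur complement with respect to $M$, which is also positive definite. This gives $N-|\Omega|^2M^{-1}\succeq0$. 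Taking the trace and using $\Tr(N)=|\Omega|$ from \eqref{eq:N-trace}, we obtain $|\Omega|^2\Tr(M^{-1})\le|\Omega|$, which is \eqref{eq:endpoint-HS}.

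For equality, the trace of a positive semidefinite matrix vanishes only if the matrix vanishes. So equality holds if and only if $N=|\Omega|^2M^{-1}$. This Schur complement is exactly the Gram matrix of the residuals $n_j-\Pi_{\cV}n_j$, where $\Pi_\cV$ is the energy-orthogonal projection onto $\cV$. Indeed, $\Pi_\cV n_j=\sum_k |\Omega|(M^{-1})_{kj}\psi_k$, because $\ip{\psi_i}{n_j}_*=|\Omega|\delta_{ij}$. Hence equality is equivalent to $n_j\in\cV$ for every $j$. In that case $Sn_j$ is affine in $\Omega$ by \cref{lem:affine-field}. By \eqref{eq:gradP}, every $\partial_jP_\Omega$ is then affine in $\Omega$, so $P_\Omega$ agrees with a quadratic polynomial on $\Omega$.

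We then invoke the converse to Newton's theorem in Karp's formulation. A bounded domain whose interior Newtonian potential is a quadratic polynomial is an ellipsoid. This step needs care with the hypotheses: connectedness, the regularity of $\partial\Omega$, and the requirement that the complement be connected or that $\Omega$ be the interior of its closure. For $d=2$ the logarithmic normalisation must also be checked. I expect this step to be the main obstacle, namely matching our class of domains to the precise hypotheses of the cited converse. If holes are a concern, we can note that quadratic $P_\Omega$ in $\Omega$ together with harmonicity outside gives a quadrature-type identity. Karp's version covers this for bounded domains.

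Conversely, suppose $\Omega$ is an ellipsoid. By Newton's theorem $P_\Omega$ is quadratic in $\Omega$, so $Sn_j$ is affine there, say $Sn_j=-a_j\cdot x+c$. Then $A n_j = -\partial_\nu(Sn_j)|_- = n_{a_j}$ by \eqref{eq:jump}. Applying $A^{-1}$ gives $n_{a_j}=A\,n_j$ with $n_j=A^{-1}n_{a_j}=\psi_{a_j}\in\cV$. The residuals therefore vanish and equality holds in \eqref{eq:endpoint-HS}. One could alternatively verify this with the explicit depolarization factors, which sum to one.
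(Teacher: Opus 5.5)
Your algebraic argument is the paper's. The block Gram matrix, the Schur complement $N-|\Omega|^2M^{-1}\succeq0$, the trace with $\Tr(N)=|\Omega|$, the identification of the Schur complement with the Gram matrix of the residuals $n_j-\Pi_\cV n_j$, and the converse for ellipsoids via Newton's theorem and \eqref{eq:jump} are all correct.

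The gap is exactly where you locate it, and it is not closed. The sentence ``a bounded domain whose interior Newtonian potential is a quadratic polynomial is an ellipsoid'' is false without hypotheses: delete one interior point from an ellipsoid. The $C^{1,\alpha}$ regularity rules out degeneracies of that kind. However, the hypotheses of \cref{thm:main} still allow $\partial\Omega$ to have several components, so $\Omega$ may have holes. The paper invokes Karp's converse to Newton's theorem only after proving that $\R^d\setminus\overline\Omega$ is connected. Your assertion that ``Karp's version covers this for bounded domains'' does not verify that hypothesis.

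The missing step is a short flux argument. Let $Q$ be the quadratic with $P_\Omega=Q$ in $\Omega$, so that $\Delta Q=-1$. Suppose $D$ were a bounded component of $\R^d\setminus\overline\Omega$.
\begin{itemize}
\item $P_\Omega$ is harmonic in $D$.
\item $\partial D\subset\partial\Omega$ is a union of $C^{1,\alpha}$ boundary components, so the divergence theorem applies on $D$.
\item $P_\Omega\in C^1(\R^d)$, because $P_\Omega\in W^{2,p}_{\mathrm{loc}}$ with $p>d$ as in \cref{lem:N-trace}. Hence $\nabla P_\Omega=\nabla Q$ on $\partial D$.
\end{itemize}
Therefore
\[
0=\int_{\partial D}\partial_{\nu_D}P_\Omega\,\dd\sigma=\int_{\partial D}\partial_{\nu_D}Q\,\dd\sigma=\int_D\Delta Q\,\dd x=-|D|,
\]
a contradiction. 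Your remark about a ``quadrature-type identity'' points in this direction. But the two facts that make it work never appear: the $C^1$ matching of $P_\Omega$ across $\partial\Omega$, and $\Delta Q=-1$. As written, the ``only if'' half of the equality statement is incomplete.
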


\begin{proof}
By \cref{prop:two-forms}, the Gram matrix of the $2d$ vectors
$$
   \psi_1,\dots,\psi_d,n_1,\dots,n_d
$$
in the energy space is
\begin{equation}\label{eq:block-Gram}
   G:=
   \begin{pmatrix}
      M&|\Omega|I_d\\
      |\Omega|I_d&N
   \end{pmatrix}\succeq0.
\end{equation}
Since $M$ is positive definite, its Schur complement is nonnegative:
\begin{equation}\label{eq:Schur}
   N-|\Omega|^2M^{-1}\succeq0.
\end{equation}
Taking traces and using \eqref{eq:N-trace} gives
$$
   |\Omega|=\Tr(N)\ge |\Omega|^2\Tr(M^{-1}),
$$
which is \eqref{eq:endpoint-HS}.

Equality can be characterised through the Schur complement. Since
$N-|\Omega|^2M^{-1}$ is positive semidefinite, equality in the trace
inequality is equivalent to
\begin{equation}\label{eq:Schur-zero}
N-|\Omega|^2M^{-1}=0.
\end{equation}
This matrix is the Gram matrix of the residuals
\begin{equation}\label{eq:residuals}
   q_j:=n_j-|\Omega|\sum_{i=1}^d(M^{-1})_{ij}\psi_i,
   \qquad j=1,\dots,d.
\end{equation}
Indeed, expanding $\ip{q_i}{q_j}_*$ and using the blocks in \eqref{eq:block-Gram} gives $N-|\Omega|^2M^{-1}$. Thus equality in \eqref{eq:endpoint-HS} is equivalent to $q_j=0$ for every $j$, or equivalently
\begin{equation}\label{eq:n-in-V}
   n_1,\dots,n_d\in\cV.
\end{equation}
If \eqref{eq:n-in-V} holds, then for each $j$ there is $\beta_j\in\R^d$ such that $n_j=\psi_{\beta_j}$. By \cref{lem:affine-field},
$$
   \partial_jP_\Omega=S n_j=-\beta_j\cdot x+c_j
   \qquad\text{in }\Omega.
$$
Hence every first derivative of $P_\Omega$ is affine, and $P_\Omega$ agrees in $\Omega$ with a quadratic polynomial $Q$ satisfying $\Delta Q=-1$. We first show that $\R^d\setminus\overline\Omega$ is connected. If $D$ were a bounded component of this set, let $\nu_D$ denote its outward unit normal. The function $P_\Omega$ is harmonic in $D$, and its $C^1$ regularity gives $\nabla P_\Omega=\nabla Q$ on $\partial D$. The divergence theorem on $D$ would therefore give
$$
   0=\int_{\partial D}\partial_{\nu_D}P_\Omega\,\dd\sigma
   =\int_{\partial D}\partial_{\nu_D}Q\,\dd\sigma
   =\int_D\Delta Q\,\dd x
   =-|D|,
$$
a contradiction. Thus $\R^d\setminus\overline\Omega$ is connected. Karp's all-dimensional form of the converse to Newton's theorem now shows that $\Omega$ is an ellipsoid \cite{Karp1994}. 

Conversely, if $\Omega$ is an ellipsoid, Newton's theorem says that $P_\Omega$ is quadratic in $\Omega$. Thus for each $j$ there are $\beta_j\in\R^d$ and $c_j\in\R$ such that
$$
   S n_j=\partial_jP_\Omega=-\beta_j\cdot x+c_j.
$$
Taking the interior normal derivative and using \eqref{eq:jump} gives
$$
   -A n_j=-n_{\beta_j},
   \qquad\text{hence}\qquad
   n_j=A^{-1}n_{\beta_j}=\psi_{\beta_j}\in\cV.
$$
Therefore all residuals \eqref{eq:residuals} vanish and equality holds.
\end{proof}

\begin{remark}[Finite contrast]\label{rem:finite-contrast}
For $k>1$, let
$$
   M(k)_{ij}:=\int_{\partial\Omega}x_j
   \left(\frac{k+1}{2(k-1)}I-K^*\right)^{-1}n_i\,\dd\sigma.
$$
The classical inverse-trace Hashin--Shtrikman inequality \cite[Theorem~4.16]{AmmariKang2007} states that
$$
   |\Omega|\Tr(M(k)^{-1})\le\frac{k+d-1}{k-1}.
$$
The right-hand side tends to $1$ as $k\to\infty$, and the resolvent converges on the mean-zero subspace to $A^{-1}$. Thus \eqref{eq:endpoint-HS} is its perfect-conductor endpoint. Further polarization-tensor bounds appear in \cite{Lipton1993,CapdeboscqVogelius2006,KangMilton2008}.
\end{remark}

\section{The spectral inequalities}\label{sec:spectral}

We combine the endpoint bound with the variational principle. Let $\Pi_\cV$ denote the energy-orthogonal projection onto the trial space $\cV$.

\begin{theorem}[Ky Fan bound]\label{thm:kyfan}
One has
\begin{equation}\label{eq:kyfan-bound}
   \sum_{j=1}^d\lambda_j^+(\Omega)
   \ge \Tr\bigl(\Pi_\cV K^*\restr_{\cV}\bigr)
   =\frac d2-|\Omega|\Tr(M^{-1})
   \ge\frac{d-2}{2}.
\end{equation}
\end{theorem}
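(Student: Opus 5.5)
The statement splits into three links: the middle equality, the final lower bound, and the first (Ky Fan) inequality. The last two follow from earlier results; only the Ky Fan step needs an argument.

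\emph{The middle equality.} The compression $\Pi_\cV K^*\restr_\cV$ is self-adjoint on the $d$-dimensional space $\cV$ with the energy inner product. Its eigenvalues are exactly the Ritz values $r_1,\dots,r_d$ of \cref{cor:ritz}: in the basis $\psi_1,\dots,\psi_d$ its matrix is $M^{-1}\bigl(\tfrac12 M-|\Omega|I_d\bigr)$. Hence the trace equals $\frac d2-|\Omega|\Tr(M^{-1})$, by \eqref{eq:ritz-trace}.

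\emph{The final inequality.} This is \cref{thm:endpoint-HS}, which gives $|\Omega|\Tr(M^{-1})\le 1$.

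\emph{The Ky Fan inequality.} I would prove that for any $d$-dimensional subspace $\cV\subset\cH_0^*$ with energy-orthonormal basis $u_1,\dots,u_d$,
$$
\sum_{j=1}^d\ip{u_j}{K^*u_j}_*\le\sum_{j=1}^d\lambda_j^+(\Omega).
$$
Write $K^*=T_+-T_-$, where $T_\pm\succeq0$ are the positive and negative parts of the compact self-adjoint operator $K^*$ on $\cH_0^*$. Then
$$
\sum_j\ip{u_j}{K^*u_j}_*\le\sum_j\ip{u_j}{T_+u_j}_*.
$$
The operator $T_+$ is compact and nonnegative, with eigenvalues equal to the positive eigenvalues of $K^*$ followed by zeros. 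This is exactly the sequence $\lambda_j^+(\Omega)$ from \eqref{eq:upper-variational}.

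For compact $T_+\succeq0$ with eigenvalues $\tau_1\ge\tau_2\ge\cdots\ge0$ and orthonormal eigenvectors $e_k$, I would bound the right-hand side as follows. Expand
$$
\sum_j\ip{u_j}{T_+u_j}_*=\sum_k\tau_k w_k,
\qquad
w_k=\sum_j\ip{u_j}{e_k}_*^2.
$$
Since the $u_j$ are orthonormal, $0\le w_k\le1$ by Bessel's inequality applied to $e_k$, and $\sum_k w_k\le d$. A linear function with nonincreasing nonnegative coefficients, maximised under these constraints, gives $\sum_k\tau_kw_k\le\sum_{k\le d}\tau_k$.

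If $T_+$ has fewer than $d$ positive eigenvalues, the remaining $\lambda_j^+$ are zero and the bound still holds, because $\tau_k=0$ there. If $\ker T_+$ is nontrivial, its contribution is zero, so completeness of the eigenbasis is not needed.

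The only mild obstacle is matching $\lambda_j^+(\Omega)$, defined by the min--max \eqref{eq:upper-variational}, with the eigenvalues of $T_+$. This includes the case where the trailing values are zero without zero being an eigenvalue. The paper's remark after \eqref{eq:upper-variational} settles exactly this identification, so I would cite it.
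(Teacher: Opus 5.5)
Your proposal is correct and follows the paper's proof. Both take $W=\cV$ in Ky Fan's principle, identify the trace of the compression with $\sum_j r_j=\frac d2-|\Omega|\Tr(M^{-1})$ via \cref{cor:ritz}, finish with \cref{thm:endpoint-HS}, and differ only in that the paper cites Ky Fan's maximum principle in its equality form, whereas you prove the one inequality actually needed directly (via the positive part $T_+$ and the Bessel weights $w_k$).
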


\begin{proof}
For each finite-dimensional subspace $W\subset\cH_0^*$, let $\Pi_W$ be the energy-orthogonal projection onto $W$. Ky Fan's maximum principle gives
\begin{equation}\label{eq:kyfan-principle}
   \sum_{j=1}^d\lambda_j^+(\Omega)
   =\sup_{\substack{W\subset\cH_0^*\\\dim W=d}}
       \Tr\bigl(\Pi_WK^*\restr_W\bigr).
\end{equation}
Taking $W=\cV$, applying \cref{cor:ritz}, and using \cref{thm:endpoint-HS} proves \eqref{eq:kyfan-bound}.
\end{proof}

\begin{corollary}\label[corollary]{cor:max-bound}
For every $d\ge2$,
\begin{equation}\label{eq:max-bound}
   \lambda_1^+(\Omega)\ge\frac{d-2}{2d}.
\end{equation}
\end{corollary}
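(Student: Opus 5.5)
The corollary follows from the Ky Fan bound in \cref{thm:kyfan} by an averaging argument. Our plan is to show that the upper min--max values are nonincreasing, so the largest one dominates the average of the first $d$.

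First, from \eqref{eq:upper-variational}, the sequence $\lambda_j^+(\Omega)$ is nonincreasing in $j$. Any $(j-1)$-dimensional $F$ can be enlarged to a $j$-dimensional $F'\supset F$. Then $F'^\perp\subset F^\perp$, so the supremum over $F'^\perp$ is no larger than the supremum over $F^\perp$. Taking the infimum over $F$ gives $\lambda_{j+1}^+\le\lambda_j^+$. Alternatively, one can use the description recorded after \eqref{eq:upper-variational}: the values are the positive eigenvalues in nonincreasing order, followed by zeros. Either way,
$$
   d\,\lambda_1^+(\Omega)\ge\sum_{j=1}^d\lambda_j^+(\Omega).
$$

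Second, we combine this with \eqref{eq:kyfan-bound}:
$$
   d\,\lambda_1^+(\Omega)\ge\frac{d-2}{2}.
$$
Dividing by $d$ gives \eqref{eq:max-bound}.

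There is no genuine obstacle here. The only point needing care is that the min--max values may include zeros that are not eigenvalues. The monotonicity argument above works directly from the variational definition, so no spectral description is required. When $d=2$ the bound reads $\lambda_1^+\ge0$. This is consistent, since $\lambda_1^+(\Omega)\ge0$ always holds for compact $K^*$ on the infinite-dimensional space $\cH_0^*$: the supremum of the Rayleigh quotient over $F^\perp$ is at least $0$.
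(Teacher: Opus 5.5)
Your proposal is correct and follows the paper's proof: the paper also bounds $d\,\lambda_1^+(\Omega)$ below by $\sum_{j=1}^d\lambda_j^+(\Omega)$ using monotonicity of the upper min--max values, then applies the Ky Fan bound \eqref{eq:kyfan-bound} and divides by $d$. Your check of monotonicity directly from \eqref{eq:upper-variational} spells out a step the paper states without proof.
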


\begin{proof}
Since $\lambda_1^+\ge\lambda_j^+\ge0$,
$$
   d\lambda_1^+(\Omega)
   \ge\sum_{j=1}^d\lambda_j^+(\Omega)
   \ge\frac{d-2}{2}.
$$
\end{proof}

\begin{corollary}[The $1/6$-conjecture]\label[corollary]{cor:one-sixth}
If $\Omega\subset\R^3$ is a bounded connected domain with boundary of class $C^{1,\alpha}$, where $0<\alpha<1$, then
$$
   \sup\sigma\bigl(K^*\restr_{\cH_0^*}\bigr)\ge\frac16.
$$
Equivalently, the largest Neumann--Poincar\'e eigenvalue distinct from $1/2$ is at least $1/6$.
\end{corollary}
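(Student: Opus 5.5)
This is the case $d=3$ of \cref{cor:max-bound}. It then remains to translate $\lambda_1^+(\Omega)$ into the spectral edge and into the language of eigenvalues distinct from $1/2$.

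\emph{Step 1.} Set $d=3$ in \eqref{eq:max-bound}. This gives
$$
   \lambda_1^+(\Omega)\ge\frac{3-2}{2\cdot3}=\frac16.
$$
By \eqref{eq:lambda-edge}, $\lambda_1^+(\Omega)=\sup\sigma(K^*\restr_{\cH_0^*})$, and this proves the displayed inequality.

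\emph{Step 2.} Now I pass to the equivalent eigenvalue formulation. Since the bound is strictly positive, and the min--max values in \eqref{eq:upper-variational} consist of the positive eigenvalues followed by zeros, $\lambda_1^+(\Omega)$ is not one of the trailing zeros. It is therefore a genuine positive eigenvalue of the compact self-adjoint operator $K^*\restr_{\cH_0^*}$, namely the largest one. If $\lambda_1^+(\Omega)$ were $0$, the sequence would contain no positive eigenvalue. That case is excluded because $\lambda_1^+(\Omega)\ge 1/6>0$.

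\emph{Step 3.} Finally I identify this spectrum with the Neumann--Poincar\'e spectrum. The discussion after \cref{lem:A-positive} shows that restricting to $\cH_0^*$ removes exactly the simple eigenvalue $1/2$. It also shows that every eigenfunction with eigenvalue $\lambda\ne 1/2$ lies in $\cH_0^*$. Moreover, $1/2$ is not an eigenvalue on $\cH_0^*$, because $A$ is injective there. Hence the eigenvalues of $K^*\restr_{\cH_0^*}$ are precisely the eigenvalues of $K^*$ on $H^{-1/2}(\partial\Omega)$ other than $1/2$. Step~2 then says that the largest of these is at least $1/6$.

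No step here is a real obstacle, since everything reduces to results already proved. The only point needing care is Step~2, where one must rule out that the supremum of the spectrum is a non-attained accumulation value. This is handled by positivity of the bound together with compactness: a compact self-adjoint operator has $0$ as its only possible accumulation point.
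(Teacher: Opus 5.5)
Your proposal is correct and matches the paper. The paper gives no separate proof: the corollary is the case $d=3$ of \cref{cor:max-bound} combined with \eqref{eq:lambda-edge}, and the equivalent eigenvalue formulation rests on the discussion after \cref{lem:A-positive} showing that restriction to $\cH_0^*$ removes exactly the simple eigenvalue $1/2$. Your Steps 2 and 3 spell out details the paper leaves implicit, namely that positivity of the bound makes $\lambda_1^+(\Omega)$ a genuine eigenvalue, and that the eigenvalues of the restriction are exactly the Neumann--Poincar\'e eigenvalues other than $1/2$.
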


\section{Rigidity at the sharp constant}\label{sec:rigidity}

We classify equality in \eqref{eq:max-bound}. Equality will force the quadratic Newtonian potential to have isotropic Hessian.

\begin{theorem}[Rigidity]\label{thm:rigidity}
Equality holds in \eqref{eq:max-bound} if and only if $\Omega$ is a ball.
\end{theorem}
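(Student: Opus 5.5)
The ``if'' direction follows from \cref{prop:sphere}: on a ball, \eqref{eq:sphere-top} gives $\lambda_1^+(B)=\frac{d-2}{2d}$. For the converse, assume $\lambda_1^+(\Omega)=\frac{d-2}{2d}=c_d$. The chain in the proof of \cref{cor:max-bound} then becomes a chain of equalities. Since $d\lambda_1^+\ge\sum_{j\le d}\lambda_j^+$, \cref{thm:kyfan} gives
$$
   dc_d\ge\sum_{j=1}^d\lambda_j^+(\Omega)\ge\sum_{j=1}^d r_j=\frac d2-|\Omega|\Tr(M^{-1})\ge\frac{d-2}{2}=dc_d ,
$$
so every inequality here is an equality. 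Two facts follow. First, $|\Omega|\Tr(M^{-1})=1$, so by \cref{thm:endpoint-HS} $\Omega$ is an ellipsoid. Second, $\sum_j r_j=dc_d$. Each Ritz value obeys $r_j\le\lambda_1^+(\Omega)=c_d$, because it is a Rayleigh quotient of $K^*$ on $\cV$ and $\lambda_1^+$ is the top of the spectrum by \eqref{eq:lambda-edge}. For $d\ge3$ we have $c_d>0$, so $\lambda_1^+=\sup_{u\neq 0}\ip{u}{K^*u}_*/\norm{u}_*^2$. (For $d=2$, $\lambda_1^+=0$ means the spectrum lies in $(-\infty,0]$, and the same Rayleigh bound $r_j\le0$ holds.) Hence $r_j=c_d$ for every $j$, and \eqref{eq:ritz-values} gives $\mu_j=d|\Omega|$ for all $j$, that is, $M=d|\Omega|I_d$.

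Next I translate this into the Newtonian potential. In the equality case $q_j=0$, so \eqref{eq:residuals} gives
$$
   n_j=|\Omega|\sum_i(M^{-1})_{ij}\psi_i=\tfrac1d\psi_j ,
$$
that is, $\beta_j=e_j/d$. By \cref{lem:affine-field} and \eqref{eq:gradP},
$$
   \partial_jP_\Omega=Sn_j=-\tfrac1d x_j+c_j \qquad\text{in }\Omega .
$$
Therefore $P_\Omega(x)=-\frac{|x|^2}{2d}+b\cdot x+C$ in $\Omega$, and its Hessian is $-\frac1d I_d$.

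It remains to show that an ellipsoid whose interior Newtonian potential has isotropic Hessian is a ball. I plan to compare with the explicit Newton formula for an ellipsoid. After a rotation and translation, let $E=\{\sum x_i^2/a_i^2<1\}$. The classical formula gives $P_E=-\frac12\sum_i A_i x_i^2+\text{const}$ inside $E$, with
$$
   A_i=\frac{a_1\cdots a_d}{2}\int_0^\infty\frac{\dd s}{(a_i^2+s)\prod_k(a_k^2+s)^{1/2}} .
$$
The weight $\frac{1}{a_i^2+s}$ is strictly decreasing in $a_i$, so $A_i$ is strictly decreasing in $a_i$. Isotropy forces all $A_i=1/d$, hence all $a_i$ equal, and $\Omega$ is a ball.

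A citation-free alternative is to note that $N=-\int_\Omega\nabla^2P_\Omega=\frac{|\Omega|}{d}I$, so $N$ is isotropic as well. One could then use $\int_{\partial\Omega}(x\cdot\nu)\,\cdot$ identities, but the explicit formula is cleaner.

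The main obstacle is this last step: making the ellipsoid-to-ball deduction rigorous in every dimension, including $d=2$ with its logarithmic kernel. I would cite the standard Newton-potential formula for ellipsoids (the depolarization factors), valid for $d\ge2$, rather than derive it. Its strict monotonicity in the semi-axes is elementary.
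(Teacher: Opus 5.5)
Your proof is correct and follows essentially the paper's argument: squeeze the Ritz values to get $M=d|\Omega|I_d$, use equality in \cref{thm:endpoint-HS} to obtain an ellipsoid whose Newtonian potential has Hessian $-\frac1d I_d$, and then compare depolarization factors. The only deviation is that you read off $n_j=\frac1d\psi_j$ directly from the vanishing residuals $q_j$, whereas the paper first shows $N=\frac{|\Omega|}{d}I_d$ and then computes $\norm{\psi_a-d\,n_a}_*=0$. In the last step, phrase the monotonicity as the comparison $a_i>a_j\Rightarrow A_i<A_j$ rather than ``$A_i$ is decreasing in $a_i$''. The integrands of $A_i$ and $A_j$ differ only in $(a_i^2+s)^{-1}$ versus $(a_j^2+s)^{-1}$, and it is this comparison that forces equal semi-axes.
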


\begin{proof}
A ball attains equality by \cref{prop:sphere}. Conversely, suppose
\begin{equation}\label{eq:equality-assume}
   \lambda_1^+(\Omega)=c_d=\frac{d-2}{2d}.
\end{equation}
Every Ritz value $r_j$ in \eqref{eq:ritz-values} is bounded above by the spectral supremum $c_d$. On the other hand,
$$
   \sum_{j=1}^dr_j\ge\frac{d-2}{2}=dc_d.
$$
It follows that
\begin{equation}\label{eq:all-ritz-equal}
   r_1=\cdots=r_d=c_d.
\end{equation}
By \eqref{eq:ritz-values}, all eigenvalues of $M$ are $d|\Omega|$, and hence
\begin{equation}\label{eq:M-isotropic}
   M=d|\Omega|I_d.
\end{equation}
In particular, equality holds in \eqref{eq:endpoint-HS}. From \eqref{eq:Schur}, \eqref{eq:N-trace}, and \eqref{eq:M-isotropic},
$$
   N\succeq\frac{|\Omega|}{d}I_d,
   \qquad
   \Tr(N)=|\Omega|,
$$
so
\begin{equation}\label{eq:N-isotropic}
   N=\frac{|\Omega|}{d}I_d.
\end{equation}
For $a\in\R^d$, equations \eqref{eq:M-gram}, \eqref{eq:A-form}, \eqref{eq:M-isotropic}, and \eqref{eq:N-isotropic} give
\begin{align*}
   \norm{\psi_a-d\,n_a}_*^2
   =\norm{\psi_a}_*^2-2d\ip{\psi_a}{n_a}_*+d^2\norm{n_a}_*^2
   =d|\Omega|\,|a|^2-2d|\Omega|\,|a|^2+d|\Omega|\,|a|^2=0.
\end{align*}
Thus
\begin{equation}\label{eq:psi-dn}
   \psi_a=d\,n_a
   \qquad(a\in\R^d).
\end{equation}
Applying \cref{lem:affine-field} and dividing by $d$ yields
\begin{equation}\label{eq:Sn-isotropic}
   S n_a(x)=-\frac1d a\cdot x+\widetilde c_a,
   \qquad x\in\Omega.
\end{equation}
Since $\partial_jP_\Omega=S n_j$, there are $b\in\R^d$ and $C\in\R$ such that
\begin{equation}\label{eq:P-isotropic}
   P_\Omega(x)=-\frac{|x|^2}{2d}+b\cdot x+C,
   \qquad x\in\Omega.
\end{equation}
The equality statement in \cref{thm:endpoint-HS} implies that $\Omega$ is an ellipsoid. After a translation and an orthogonal change of coordinates, it has the form
$$
   \Omega=\left\{x\in\R^d:
       \sum_{i=1}^d\frac{x_i^2}{\rho_i^2}<1\right\},
   \qquad \rho_i>0.
$$
Suppose first that $d\ge3$. The interior Hessian of its Newtonian potential is diagonal with entries $-L_i$, where the depolarization factors are
\begin{equation}\label{eq:depolarization}
   L_i=\frac{\prod_{m=1}^d\rho_m}{2}
   \int_0^\infty
   \frac{\dd s}{(\rho_i^2+s)\prod_{m=1}^d(\rho_m^2+s)^{1/2}},
   \qquad
   \sum_{i=1}^dL_i=1.
\end{equation}
See, for example, \cite[Theorem~2.1]{DiFratta2016}. Equation \eqref{eq:P-isotropic} gives $L_i=1/d$ for every $i$. If $\rho_i>\rho_j$, the integrand defining $L_i$ is strictly smaller than the integrand defining $L_j$ for every $s>0$, so $L_i<L_j$. Hence equality of all depolarization factors forces
$$
   \rho_1=\cdots=\rho_d.
$$
When $d=2$, direct calculation of the logarithmic potential gives
$$
   L_i=\frac{\rho_1\rho_2}{2}
   \int_0^\infty
   \frac{\dd s}{(\rho_i^2+s)
   \bigl((\rho_1^2+s)(\rho_2^2+s)\bigr)^{1/2}},
   \qquad i=1,2.
$$
The identity
$$
   \frac{\dd}{\dd s}
   \left(\frac{\rho_2^2+s}{\rho_1^2+s}\right)^{1/2}
   =\frac{\rho_1^2-\rho_2^2}
   {2(\rho_1^2+s)^{3/2}(\rho_2^2+s)^{1/2}}.
$$
therefore yields
$$
   L_1=\frac{\rho_2}{\rho_1+\rho_2},
   \qquad
   L_2=\frac{\rho_1}{\rho_1+\rho_2}.
$$
Equation \eqref{eq:P-isotropic} gives $L_1=L_2=1/2$, and hence $\rho_1=\rho_2$. Thus $\Omega$ is a ball in every dimension.
\end{proof}

Combining \cref{thm:kyfan,cor:max-bound,thm:rigidity} proves \cref{thm:main}.

\begin{remark}[Regularity and topology]\label{rem:regularity}
The $C^{1,\alpha}$ assumption ensures compactness of the Neumann--Poincar\'e operator and gives the classical energy-space realisation used above. Connectedness of $\Omega$ enters through Neumann uniqueness in \cref{lem:affine-field}. The boundary may have several connected components; in an equality case, the flux argument in the proof of \cref{thm:endpoint-HS} excludes every bounded component of the complement.
\end{remark}

\paragraph{\textbf{AI declaration.}}
The proof ingredients in this paper came about through interactions between the authors and ChatGPT 5.6. We subsequently developed, refined, and independently checked all resulting arguments. The authors take full responsibility for all content.

\end{document}